\newcommand{\R}{\mathbb R}
\newcommand{\ve}{\varepsilon} 
\newcommand{\rd}{\mathrm{d}}
\newcommand{\bqn}{\begin{equation}}
\newcommand{\eqn}{\end{equation}}
\newcommand{\bean}{\begin{eqnarray}}
\newcommand{\eean}{\end{eqnarray}}
\DeclareMathAlphabet{\mathpzc}{OT1}{pzc}{m}{it}
\newtheorem{theorem}{Theorem}[section]
\newtheorem{corollary}[theorem]{Corollary}
\newtheorem{proposition}[theorem]{Proposition}
\numberwithin{equation}{section}
\title{Strong Solutions to a Nonlocal-in-Time Semilinear Heat Equation}
\begin{document}

\author{Christoph Walker}
\address{Leibniz Universit\"at Hannover\\ Institut f\" ur Angewandte Mathematik \\ Welfengarten 1 \\ D--30167 Hannover\\ Germany}
\email{walker@ifam.uni-hannover.de}
%
%
\date{\today}
\keywords{Semilinear heat equation, nonlocal in time, existence of global solutions}
\subjclass[2010]{35K91}
%
%
\begin{abstract}
Existence of strong solutions to a nonlocal semilinear heat equation is shown. The main feature of the equation is that the nonlocal term depends on the unknown on the whole time interval of existence, the latter being given a priori. The proof relies on Schauder's fixed point theorem and semigroup theory.
\end{abstract}

%
%

\date{\today}

\maketitle

\section{Introduction}

This note is dedicated to the nonlocal problem
\bqn\label{1}
\partial_t u-\Delta u+\varphi\left(\int_0^T u(s)\, \rd s\right) u=0\,,\quad (t,x)\in (0,T]\times\Omega\,,\quad u\vert_{\partial\Omega}=0\,,\quad u(0)=u^0\,,
\eqn
with given potential $\varphi$, given initial datum $u^0$, and given existence time $T>0$, and 
where $\Omega\subset\R^n$, $n\ge 1$, is a bounded $C^2$-domain. Equation~\eqref{1} arises in the modeling of a biological nanosensor in the chaotic dynamics of a polymer chain in an aqueous solution and has been introduced and considered in \cite{StaraStara17,Stara18,Stara20}. We refer to these papers for more information on the modeling background.

Clearly, the main feature of \eqref{1} is that the nonlinearity depends on
$$
u_T:=\int_0^T u(s)\, \rd s\,,
$$
that is, on the unknown on the {\it whole interval of existence $[0,T]$}, where $T>0$ is {\it a priori} given and not free to be chosen. The problem itself is thus {\it not} an evolution problem in the usual sense as it does not satisfy the {\it Volterra property} since a solution at a time instant depends also on later times, i.e. on the future. Equations violating the Volterra property arise, of course, also in other contexts, e.g.  in certain reaction-diffusion equations with non-local initial conditions~\cite{Pao95,Ly15}, in models for long-term weather forecast (see \cite{Shelukin93} and the references therein), or in the study of stationary solutions to population models including age- and spatial structure (e.g. see \cite{ChWCrelle11, ChWJEPE18}) to name but a few.\\

In \cite{Stara20} (see also \cite{Stara18}) the existence of weak solutions to \eqref{1} is shown assuming a non-negative continuous potential $\varphi$ for which $s\mapsto\varphi(s)s$ is differentiable and non-decreasing. Herein we provide a simple and short proof for the existence of {\it strong} solutions under fairly general assumptions on the potential $\varphi$. In particular, we do not use a differentiability or monotonicity assumption. More precisely, we shall prove the following result:

\begin{theorem}\label{T1}
Let $\varphi\in C(\R,\R)$ be non-negative and $2p>n$. If either
\begin{itemize}
\item[(i)] $u^0\in L_\infty(\Omega)$
\end{itemize}
or
\begin{itemize}
\item[(ii)] $\varphi(s)\le a (1+\vert s\vert)$, $s\in\R$, for some $a>0$ and $u^0\in L_p(\Omega)$,
\end{itemize}
then there is at least one strong solution 
$$
u\in C\big([0,T],L_p(\Omega)\big)\cap C^1\big((0,T],L_p(\Omega)\big)\cap C\big((0,T],W_{p}^2(\Omega)\big)
$$
to \eqref{1}. 
Moreover, $\| u(t)\|_p\le \|u^0\|_p$ for $t\in [0,T]$. If $u^0\ge 0$, then $u(t)\ge 0$ for~$t\in [0,T]$.
\end{theorem}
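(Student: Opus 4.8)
The plan is to decouple the nonlocal coupling by \emph{freezing} the time-average. For a fixed $v\in L_p(\Omega)$ playing the role of $u_T=\int_0^T u(s)\,\rd s$, set $V:=\varphi(v)\ge 0$ and solve the \emph{linear} Dirichlet problem $\partial_t u-\Delta u+Vu=0$, $u(0)=u^0$. Writing $A:=-\Delta$ for the Dirichlet realization of $-\Delta$ in $L_p(\Omega)$, the generator of an analytic semigroup $(e^{-tA})_{t\ge0}$, I would obtain the (mild) solution $u_v$ from the variation-of-constants equation $u_v(t)=e^{-tA}u^0-\int_0^t e^{-(t-s)A}\big(V u_v(s)\big)\,\rd s$, solvable by a contraction/Picard argument in $C\big([0,T],L_p(\Omega)\big)$. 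I then define $F(v):=\int_0^T u_v(s)\,\rd s$ and note that any fixed point $v^*=F(v^*)$ solves \eqref{1}: taking $u:=u_{v^*}$ gives $\int_0^T u(s)\,\rd s=v^*$, hence $V=\varphi(v^*)=\varphi(u_T)$, so $u$ satisfies the original equation. The whole argument thus reduces to producing a fixed point of $F$ by Schauder's theorem, for which I must exhibit a closed bounded convex self-map set, continuity of $F$, and relative compactness of its image.

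The decisive a priori bound is the $L_p$-contraction $\|u_v(t)\|_p\le\|u^0\|_p$. Since $V\ge 0$, testing the equation with $|u_v|^{p-2}u_v$ makes the diffusion term nonnegative (the boundary term vanishes by the Dirichlet condition) and the potential term $\int_\Omega V|u_v|^p\ge 0$, so $t\mapsto\|u_v(t)\|_p$ is nonincreasing; equivalently, $-\Delta+V$ generates a positivity-preserving $L_p$-contraction semigroup dominated by the Dirichlet heat semigroup. Hence $\|F(v)\|_p\le\int_0^T\|u_v(s)\|_p\,\rd s\le T\|u^0\|_p$, so the closed ball $S:=\{v\in L_p(\Omega):\|v\|_p\le T\|u^0\|_p\}$ is mapped into itself, and it is closed, bounded and convex. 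Positivity of the semigroup shows $u_v\ge 0$ whenever $u^0\ge 0$, which will transfer to the fixed point. The two hypotheses enter only to render $V=\varphi(v)$ an admissible potential: in case (i) the bound $\|u_v(t)\|_\infty\le\|u^0\|_\infty$ lets one take $v$ bounded, so $V\in L_\infty(\Omega)$; in case (ii) the linear growth $\varphi(s)\le a(1+|s|)$ with $v\in L_p(\Omega)$ only gives $V\in L_p(\Omega)$, and here $2p>n$ is essential, yielding $W_p^{2\theta}(\Omega)\hookrightarrow L_\infty(\Omega)$ for some $\theta\in(n/(2p),1)$ and hence the $L_\infty$-smoothing needed to estimate the product $Vu_v(s)$ for $s>0$.

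Compactness of $F(S)$ I would extract from parabolic smoothing. Using the analyticity estimate $\|A^\theta e^{-tA}\|_{\ml(L_p)}\le M t^{-\theta}$ with a fixed $\theta\in(0,1)$ (so $t\mapsto t^{-\theta}$ is integrable on $(0,T)$), I apply $A^\theta$ to the mild formula and bound $\int_0^T\|A^\theta u_v(s)\|_p\,\rd s$ uniformly in $v\in S$; the first term is controlled by $M\|u^0\|_p\int_0^T s^{-\theta}\,\rd s$, and the Duhamel term by a convergent double integral of the kernel $(t-s)^{-\theta}$ against $\|Vu_v(s)\|_p$. This places $F(v)$ in a bounded subset of the fractional power space $D(A^\theta)$, which embeds compactly into $L_p(\Omega)$. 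Continuity of $F$ then follows from continuous dependence on the potential: if $v_k\to v$ in $L_p(\Omega)$, then along a subsequence $\varphi(v_k)\to\varphi(v)$ a.e., and the uniform bounds together with dominated convergence allow passage to the limit in the mild equation for $u_{v_k}$. Schauder's theorem then furnishes $v^*=F(v^*)$, and the claimed regularity $u\in C([0,T],L_p)\cap C^1((0,T],L_p)\cap C((0,T],W_p^2)$ is the standard smoothing of the analytic semigroup applied to the now-fixed linear problem.

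I expect the main obstacle to be case (ii), where the frozen potential $V=\varphi(v)$ is merely $L_p$-integrable and hence \emph{unbounded}: both the solvability of the linear problem and, above all, the \emph{uniform-in-$v$} smoothing estimates underpinning the compactness and the self-map property are delicate, since they must absorb $V$ through the semigroup. The resolution should rest entirely on the gain of integrability from $2p>n$: the smoothing bound $\|u_v(s)\|_\infty\le C s^{-\theta}\|u^0\|_p$ controls $\|Vu_v(s)\|_p\le\|V\|_p\,\|u_v(s)\|_\infty$ with a time weight $s^{-\theta}$ that remains integrable, so that the Duhamel term is a genuine perturbation and all estimates close uniformly on $S$.
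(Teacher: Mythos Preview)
Your overall strategy---freeze $u_T$, solve the resulting linear problem, and apply Schauder's theorem to the map $v\mapsto\int_0^T u_v(s)\,\rd s$---is exactly the paper's. The paper likewise works in the $L_\infty$-ball of radius $T\|u^0\|_\infty$ in case~(i) and the $L_p$-ball of radius $T\|u^0\|_p$ in case~(ii), uses the contraction and positivity of the perturbed semigroup for the self-map property and the sign assertion, and extracts compactness from smoothing into $W_{p,D}^{2\alpha}(\Omega)$ with $2\alpha\in(n/p,2)$.

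The one substantive difference is how the frozen linear problem is handled. You keep $A=-\Delta_D$ as the generator and treat $V=\varphi(v)$ as a Duhamel perturbation, planning in case~(ii) to close the estimates by bootstrapping the smoothing bound $\|u_v(s)\|_\infty\le Cs^{-\theta}\|u^0\|_p$ through the integral equation; note that this bound is for the \emph{perturbed} solution, so obtaining it with $C$ uniform over $v\in S$ and over all of $[0,T]$ genuinely requires a singular Gronwall step, which you allude to but do not spell out. The paper instead absorbs the potential into the generator: using $W_{p,D}^{2-2\varepsilon}(\Omega)\hookrightarrow C(\bar\Omega)$ and interpolation it shows that $\varphi(v)$ is $(-\Delta_D)$-bounded with relative bound $<1$ uniformly in $v$, hence $A(v):=-\Delta_D+\varphi(v)$ lies in a fixed class $\mathcal{H}(W_{p,D}^2,L_p;\kappa,\omega)$ and the solution is simply $u_v(t)=e^{-tA(v)}u^0$ with all smoothing constants uniform by construction. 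Continuity of the fixed-point map then follows from the identity
\[
e^{-tA(u_T)}-e^{-tA(v_T)}=-\int_0^t e^{-(t-s)A(u_T)}\big(\varphi(u_T)-\varphi(v_T)\big)\,e^{-sA(v_T)}\,\rd s,
\]
which replaces your subsequence/dominated-convergence argument by a direct estimate. Your route is more hands-on and avoids quoting abstract perturbation theory for analytic generators; the paper's route is shorter because the uniform estimates come for free once the perturbation lemma is invoked.
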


The proof is an application of Schauder's fixed point theorem based on the fact that, under suitable assumptions, the operator $A(u_T):=-\Delta_D+\varphi(u_T)$ generates a semigroup $(e^{-t A(u_T)})_{t\ge 0}$ on $L_p(\Omega)$, where $-\Delta_D$ denotes the Laplacian subject to Dirichlet boundary conditions.
Solutions to \eqref{1} are thus of the form
$$
u(t)=e^{-t A(u_T)} u^0\,,\quad t\in [0,T]\,,
$$
and, consequently, $u_T$ satisfies the fixed point equation
\bqn\label{uT}
u_T=\int_0^T e^{-t A(u_T)} u^0\,\rd t\,.
\eqn
As for $u_T$, we derive further information.

\begin{corollary}\label{C1}
Let $u$ be the solution to \eqref{1} provided by Theorem~\ref{T1}. If $p\ge 2$, then
$$
\| \nabla u_T\|_2^2+\int_\Omega \varphi(u_T) \vert u_T\vert^2\,\rd x =\int_\Omega \big(u^0-u(T)\big)\, u_T\,\rd x \le 2T\| u^0\|_2^2\,.
$$
If $u^0\in W_{p}^{\alpha}(\Omega)$ for some $\alpha>0$, then $u_T\in W_{p}^2(\Omega)$ satisfies
\bqn\label{p2}
-\Delta u_T+\varphi(u_T) u_T=u^0- u(T)\quad \text{in}\quad \Omega\,,\qquad u_T=0\quad \text{on}\quad  \partial\Omega\,.
\eqn
\end{corollary}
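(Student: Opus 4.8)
The plan is to integrate the evolution equation in time and to read off both assertions from the resulting stationary identity, using the smoothing properties of the analytic semigroup $(e^{-tA(u_T)})_{t\ge0}$ to supply the regularity of $u_T$ that is needed to make each manipulation legitimate. Throughout I write $A:=A(u_T)=-\Delta_D+\varphi(u_T)$, which is a \emph{fixed} operator once the solution $u$ — and hence $u_T$ — from Theorem~\ref{T1} is in hand, and I use that $u$ solves $\partial_t u=-Au$ on $(0,T]$ with $u(t)=e^{-tA}u^0$ and $u(t)\in D(A)$ for $t>0$.

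I would first establish the stationary equation \eqref{p2} under the hypothesis $u^0\in W_p^\alpha(\Omega)$. Shrinking $\alpha>0$ if necessary (no loss of generality since $W_p^\alpha\hookrightarrow W_p^{\alpha'}$ for $\alpha'<\alpha$ on the bounded domain $\Omega$), I may assume $\alpha$ is so small that no boundary conditions enter the description of the fractional power domain, so that $W_p^\alpha(\Omega)=D(A^{\alpha/2})$ with equivalent norms. Analyticity of the semigroup then gives the smoothing bound
\bqn
\|Au(t)\|_p=\big\|A^{1-\alpha/2}e^{-tA}A^{\alpha/2}u^0\big\|_p\le C\,t^{-(1-\alpha/2)}\,\|u^0\|_{W_p^\alpha}\,,\qquad t\in(0,T]\,,
\eqn
and since $1-\alpha/2<1$ the map $t\mapsto\|Au(t)\|_p$ is integrable on $(0,T]$. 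As $\|u(t)\|_p\le\|u^0\|_p$ by Theorem~\ref{T1}, both $u_T=\int_0^T u(t)\,\rd t$ and $\int_0^T Au(t)\,\rd t$ exist as Bochner integrals in $L_p(\Omega)$; because $A$ is closed it commutes with the integral, whence $u_T\in D(A)=W_p^2(\Omega)\cap\{v:v\vert_{\partial\Omega}=0\}$ and $Au_T=\int_0^T Au(t)\,\rd t$. Finally $Au(t)=-\partial_t u(t)$ together with the fundamental theorem of calculus for the curve $u\in C^1((0,T],L_p)\cap C([0,T],L_p)$ yields $\int_0^T Au(t)\,\rd t=u^0-u(T)$, which is precisely \eqref{p2}.

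For the energy identity (case $p\ge2$, and without the extra $W_p^\alpha$-assumption) I would argue in $L_2(\Omega)$, where $u^0\in L_p\hookrightarrow L_2(\Omega)$ and, since $\varphi(u_T)\ge0$, the operator $A$ is self-adjoint and non-negative with associated form $\mathfrak a(v,w)=\int_\Omega\nabla v\cdot\nabla w\,\rd x+\int_\Omega\varphi(u_T)\,vw\,\rd x$ on the form domain $D(A^{1/2})$. The $L_2$-smoothing estimate $\|A^{1/2}u(t)\|_2\le C\,t^{-1/2}\|u^0\|_2$ is again integrable near $t=0$, so $u_T$ lies in the form domain; testing $\partial_t u=-Au$ against $u_T$, using $\tfrac{\rd}{\rd t}\langle u(t),u_T\rangle=-\mathfrak a(u(t),u_T)$ and integrating over $[0,T]$, gives
\bqn\label{en1}
\|\nabla u_T\|_2^2+\int_\Omega\varphi(u_T)\,|u_T|^2\,\rd x=\mathfrak a(u_T,u_T)=\int_\Omega\big(u^0-u(T)\big)\,u_T\,\rd x\,.
\eqn
(When $u^0\in W_p^\alpha$ this is simply \eqref{p2} tested against $u_T\in W_2^1(\Omega)$ after integration by parts, the boundary term vanishing because $u_T\vert_{\partial\Omega}=0$.) The stated bound then follows from the Cauchy--Schwarz inequality together with the $L_2$-contractivity $\|e^{-tA}\|_{\ml(L_2)}\le1$: indeed $\|u(T)\|_2\le\|u^0\|_2$ gives $\|u^0-u(T)\|_2\le2\|u^0\|_2$, while $\|u_T\|_2\le\int_0^T\|u(t)\|_2\,\rd t\le T\|u^0\|_2$, so the right-hand side of \eqref{en1} is at most $2T\|u^0\|_2^2$.

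The routine parts here are the Cauchy--Schwarz estimate and the integration by parts. The step I expect to require the most care is securing the regularity of $u_T$, namely that the singular factors $t^{-(1-\alpha/2)}$ and $t^{-1/2}$ produced by the smoothing estimates are integrable near $t=0$ and that the closed operator $A$ — respectively the closed form $\mathfrak a$ — may legitimately be interchanged with the Bochner integral in $t$. Once $u_T\in D(A)$ (respectively $u_T$ in the form domain) has been established, identifying the right-hand side through $Au=-\partial_t u$ and the fundamental theorem of calculus is immediate.
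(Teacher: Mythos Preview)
Your proof is correct and follows essentially the same strategy as the paper: integrate the evolution equation in $t$, use the semigroup smoothing $\|e^{-tA}\|_{\mathcal L(W_{p,D}^\alpha,W_{p,D}^2)}\le C\,t^{\alpha/2-1}$ to obtain $u_T\in W_{p,D}^2(\Omega)$ and hence \eqref{p2}, and use the $L_2$-contractivity of $(e^{-tA})_{t\ge0}$ together with Cauchy--Schwarz for the bound $2T\|u^0\|_2^2$. The only cosmetic difference is that for the energy identity the paper tests \eqref{1} against the truncated integral $\int_s^T u(\sigma)\,\rd\sigma\in W_{2,D}^2(\Omega)$ and then lets $s\to0^+$, whereas you place $u_T$ directly in the form domain $D(A^{1/2})$ via the integrable bound $\|A^{1/2}u(t)\|_2\le C\,t^{-1/2}$ and test once; both routes rest on the same smoothing mechanism.
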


In the next section we shall prove Theorem~\ref{T1}. As an immediate consequence we obtain Corollary~\ref{C1}. We shall also prove in Proposition~\ref{P1} the uniqueness of solutions to \eqref{1} for small initial values or small maximal existence time provided the potential~$\varphi$ is Lipschitz.

\section{Proof of Theorem~\ref{T1}}

We let $\mathcal{L}(E,F)$ denote the space of bounded linear operators between two Banach spaces $E$ and $F$ with norm \mbox{$\|\cdot\|_{\mathcal{L}(E,F)}$} and $\mathcal{L}(E):=\mathcal{L}(E,E)$. The norm e.g. in $E$ is denoted by $\|\cdot\|_{E}$ and $\|\cdot\|_{q}:=\|\cdot\|_{L_q(\Omega)}$ for $q\in [1,\infty]$. Given $p\in (1,\infty)$ we use the notation
\begin{equation*}
W_{p,D}^{\alpha}(\Omega):=\left\{ 
\begin{array}{lcl} 
\{u\in  W_{p}^{\alpha}(\Omega)\,;\, u=0\ \text{on}\ \partial\Omega\} & \text{ if } & \alpha \in \left( {\frac{1}{p}},2 \right]\ ,\\
& & \\
W_{p}^{\alpha}(\Omega) & \text{ if } & 0\le \alpha < {\frac{1}{p}}\,.
\end{array}
\right. 
\end{equation*}
Moreover, for $\omega>0$ and $\kappa\ge 1$,  let
$\mathcal{H}(W_{p,D}^2(\Omega),L_p(\Omega);\kappa,\omega)$ be the set of all $\mathcal{A}\in\mathcal{L}(W_{p,D}^2(\Omega),L_p(\Omega))$ such that $\omega+\mathcal{A}$ is an isomorphism from $W_{p,D}^2(\Omega)$ onto $L_p(\Omega)$ satisfying the resolvent estimates
$$
\frac{1}{\kappa}\,\le\,\frac{\|(\mu+\mathcal{A})z\|_{L_p(\Omega)}}{\vert\mu\vert \,\| z\|_{L_p(\Omega)}+\|z\|_{W_{p,D}^2(\Omega)}}\,\le \, \kappa\ ,\quad Re\, \mu\ge \omega\ ,\quad z\in W_{p,D}^2(\Omega)\setminus\{0\}\ .
$$
Then $\mathcal{A}\in\mathcal{H}(W_{p,D}^2(\Omega),L_p(\Omega);\kappa,\omega)$ implies that 
$\mathcal{A}\in\mathcal{H}(W_{p,D}^2(\Omega),L_p(\Omega))$; that is, 
$-\mathcal{A}$  generates an analytic semigroup $(e^{-t\mathcal{A}})_{t\ge 0}$ on $L_p(\Omega)$ with domain $W_{p,D}^2(\Omega)$, see \cite[I.Theorem 1.2.2]{LQPP}.
Recall that $-\Delta_D\in \mathcal{H}(W_{p,D}^2(\Omega),L_p(\Omega))$.\\

\subsection*{Proof of Theorem~\ref{T1}, Part \bf (i)} Assume that $\varphi\in C(\R,\R)$ is non-negative and $u^0\in L_\infty(\Omega)$. Since $\varphi$ is then uniformly continuous and bounded on bounded sets, it follows that (considered as Nemytskii operator)
\bqn\label{300}
\varphi\in C\big(L_\infty(\Omega),L_\infty(\Omega)\big)\  \text{ is bounded on bounded sets}\,.
\eqn
Set $S_0:=T\|u^0\|_\infty$ and let
$$
X_T:=\bar{\mathbb{B}}_{L_\infty(\Omega)}(0,S_0)
$$
denote the closed ball in $L_\infty(\Omega)$ of radius $S_0$ centered at the origin. Fix $2p\in (n,\infty)$ and note that, given any $u_T\in X_T$, the mapping $\varphi(u_T):=[w\mapsto \varphi(u_T) w]\in \mathcal{L}\big(L_p(\Omega)\big)$ obviously satisfies
\bqn\label{op}
\|\varphi(u_T)\|_{\mathcal{L}(L_p(\Omega))}\le \|\varphi(u_T)\|_\infty\le \max_{[-S_0,S_0]}\varphi\,,\quad u_T\in X_T\,.
\eqn
We now infer from the fact that $-\Delta_D\in \mathcal{H}(W_{p,D}^2(\Omega),L_p(\Omega))$ and the perturbation result \cite[I.Theorem 1.3.1]{LQPP} that
\bqn\label{301}
A(u_T):=-\Delta_D+\varphi(u_T)\in\mathcal{H}(W_{p,D}^2(\Omega),L_p(\Omega);\kappa,\omega(S_0))
\eqn
for some $\omega(S_0)>0$ and $\kappa\ge 1$. Moreover, since $\varphi$ is non-negative, we have that $-A(u_T)$ generates a positive contraction semigroup $(e^{-t A(u_T)})_{t\ge 0}$ on each $L_q(\Omega)$ for $q\in (1,\infty]$ (which, however, is {\it not} strongly continuous for $q=\infty$), hence
\bqn\label{cs}
\| e^{-t A(u_T)}\|_{\mathcal{L}(L_q(\Omega))}\le 1\,,\quad t\ge 0\,,\quad q\in (1,\infty]\,.
\eqn
Now, let us define
\bqn\label{Phi}
\Phi(u_T):=\int_0^T e^{-t A(u_T)} u^0\,\rd t\,,\quad u_T\in X_T\,.
\eqn
Then \eqref{cs} implies that
$$
\|\Phi(u_T)\|_\infty\le \int_0^T \|e^{-t A(u_T)}\|_{\mathcal{L}(L_\infty(\Omega))}\, \|u^0\|_\infty\,\rd t \le T\|u^0\|_\infty =S_0\,,\quad u_T\in X_T\,,
$$
so that $\Phi:X_T\rightarrow X_T$. Choose $2\alpha\in (n/p,2)$ and note that \eqref{301} together with \cite[II.Lemma 5.1.3]{LQPP} yield that 
there are $M(S_0)\ge 1$ and $\nu(S_0)>0$ such that
\bqn\label{smoot}
\|e^{-t A(u_T)}\|_{\mathcal{L}(L_p(\Omega),W_{p,D}^{2\alpha}(\Omega))}\le M(S_0)e^{\nu(S_0) t} t^{-\alpha}\,,\quad t>0\,,\quad u_T\in X_T\,.
\eqn
Therefore,
\bqn\label{smoott}
\|\Phi(u_T)\|_{W_{p,D}^{2\alpha}(\Omega)}\le \int_0^T \|e^{-t A(u_T)}\|_{\mathcal{L}(L_p(\Omega),W_{p,D}^{2\alpha}(\Omega))}\, \|u^0\|_{p}\,\rd t \le \frac{M(S_0)}{1-\alpha} e^{\nu(S_0) T} T^{1-\alpha} \|u^0\|_{p}\le c(S_0) \,,
\eqn
and we conclude that $\Phi(X_T)$ is bounded in $W_{p,D}^{2\alpha}(\Omega)$, the latter being compactly embedded in $C(\bar\Omega)$ since $2\alpha>n/p$. 

In order to prove the continuity of $\Phi$ let us observe that, given $u_T, v_T\in X_T$, we have
\begin{align}
e^{-t A(u_T)}-e^{-t A(v_T)} &=-\int_0^t \frac{\rd }{\rd s} e^{-(t-s) A(u_T)} e^{-s A(v_T)}\,\rd s\notag\\
&= -\int_0^t e^{-(t-s) A(u_T)}\,\big(\varphi(u_T)-\varphi(v_T)\big)\, e^{-s A(v_T)}\,\rd s\,,\label{form}
\end{align}
so that, using \eqref{op}, \eqref{cs}, and \eqref{smoot},
\begin{align}
\|e^{-t A(u_T)}&-e^{-t A(v_T)}\|_{\mathcal{L}(L_p(\Omega),W_{p,D}^{2\alpha}(\Omega))}\notag\\
 &\le \int_0^t \|e^{-(t-s) A(u_T)}\|_{\mathcal{L}(L_p(\Omega),W_{p,D}^{2\alpha}(\Omega))}\, \|\varphi(u_T)-\varphi(v_T)\|_{\mathcal{L}(L_p(\Omega))}\, \| e^{-s A(v_T)}\|_{\mathcal{L}(L_p(\Omega))}\,\rd s\\
&\le c(S_0)\, e^{c(S_0) T} \, t^{1-\alpha} \, \|\varphi(u_T)-\varphi(v_T)\|_{\infty}\,.\label{lll}
\end{align}
Due to the continuous embedding of  $W_{p,D}^{2\alpha}(\Omega)$ in $C(\bar\Omega)$ we derive that
\begin{equation*}
\begin{split}
\|\Phi(u_T)-\Phi(v_T)\|_\infty&\le c\, \|\Phi(u_T)-\Phi(v_T)\|_{W_{p,D}^{2\alpha}(\Omega)}\le
c \int_0^T \|e^{-t A(u_T)}-e^{-t A(v_T)}\|_{\mathcal{L}(L_p(\Omega),W_{p,D}^{2\alpha}(\Omega))} \|u^0\|_p\,\rd t\\
&\le c_1(S_0) \|\varphi(u_T)-\varphi(v_T)\|_{\infty}
\end{split}
\end{equation*}
for $u_T, v_T\in X_T$, hence the continuity of $\Phi:X_T\rightarrow X_T$ due to \eqref{300}. Consequently, $\Phi\in C(X_T,X_T)$ with precompact image so that Schauder's fixed point theorem implies the existence of $u_T\in X_T$ such that $u_T=\Phi(u_T)$. We define
\bqn\label{u}
u(t):=e^{-t A(u_T)} u^0\,,\quad t\in [0,T]\,,
\eqn
in order to obtain a solution to \eqref{1}. If $u^0\ge 0$, then $u(t)\ge 0$ for $t\in [0,T]$ since the semigroup is positive. This yields part~(i) of Theorem~\ref{T1}.\\

\subsection*{Proof of Theorem~\ref{T1}, Part \bf (ii)}  Now  assume that $\varphi\in C(\R,\R)$ is non-negative and that $\varphi(s)\le a (1+\vert s\vert)$, $s\in\R$, for some $a>0$. Consider $u^0\in L_p(\Omega)$ with $2p>n$. We then adapt the prove above. Set now $S_0:=T\|u^0\|_p$ and
$$
X_T:=\bar{\mathbb{B}}_{L_p(\Omega)}(0,S_0)\,.
$$
Note that the assumptions on $\varphi$ entail
\bqn\label{1A}
\varphi\in BC\big(X_T,L_p(\Omega)\big)\,.
\eqn
Moreover, $W_{p,D}^{2-2\ve}(\Omega)$ embeds continuously in $C(\bar\Omega)$ for $\ve>0$ small enough since $2p>n$. Thus,  for $w\in W_{p,D}^2(\Omega)$ we have
\begin{align}
\|\varphi(u_T)w\|_p&\le c\, \|\varphi(u_T)\|_p\, \| w\|_{W_{p,D}^{2-2\ve}(\Omega)}\label{e1}\\
&\le c\,\|\varphi(u_T)\|_p\, \| w\|_{p}^{\ve}\, \| w\|_{W_{p,D}^2(\Omega)}^{1-\ve} \notag\\
&\le c(\delta) \,\|\varphi(u_T)\|_p^{1/\ve}\|w\|_p+\delta\|w\|_{W_{p,D}^2(\Omega)}\notag
\end{align}
with $\delta>0$ small enough. Hence, \eqref{1A} and \cite[I.Theorem 1.3.1]{LQPP} ensure that \eqref{301} holds true again. Moreover, \eqref{cs} holds for $q=p$. Defining $\Phi$ as in  \eqref{Phi}, we argue as in part (i) to deduce that $\Phi\in C(X_T, X_T)$ has a precompact image, where the continuity follows from the fact that \eqref{form} along with \eqref{cs}, \eqref{smoot}, and \eqref{e1} ensure
\begin{equation*}
\begin{split}
\|e^{-t A(u_T)}&-e^{-t A(v_T)}\|_{\mathcal{L}(L_p(\Omega))}\\
 &\le \int_0^t \|e^{-(t-s) A(u_T)}\|_{\mathcal{L}(L_p(\Omega))}\, \|\varphi(u_T)-\varphi(v_T)\|_{\mathcal{L}(W_{p,D}^{2-2\ve}(\Omega),L_p(\Omega))}\, \| e^{-s A(v_T)}\|_{\mathcal{L}(L_p(\Omega),W_{p,D}^{2-2\ve}(\Omega))}\,\rd s\\
&\le c(S_0) \, \|\varphi(u_T)-\varphi(v_T)\|_{p}\,.
\end{split}
\end{equation*}
The assertion then again follows by applying Schauder's fixed point theorem. 

\section{Proof of Corollary~\ref{C1}}

Let $u$ be the solution to \eqref{1} provided by Theorem~\ref{T1}. If $p\ge 2$, then
$$
\int_s^T u(\sigma)\,\rd \sigma\in W_{2,D}^2(\Omega)
$$
for $s\in (0,T)$ due to the regularity of the solution $u$. Testing \eqref{1} by this quantity and letting then $s\rightarrow 0^+$ yields
\bqn\label{p1}
\| \nabla u_T\|_2^2+\int_\Omega \varphi(u_T) \vert u_T\vert^2\,\rd x =\int_\Omega  \big(u^0-u(T)\big)\, u_T\,\rd x \le \| u^0-u(T)\|_2\,\| u_T\|_2\,.
\eqn
Now, \eqref{cs} along with \eqref{u} respectively \eqref{uT} entail
$$
\|u(T)\|_2\le \| u^0\|_2\,,\qquad \|u_T\|_2\le T\| u^0\|_2\,,
$$
hence
$$
\| \nabla u_T\|_2^2+\int_\Omega \varphi(u_T) \vert u_T\vert^2\,\rd x\le 2T\| u^0\|_2^2\,.
$$
If $u^0\in W_p^{\alpha}(\Omega)$ for some $\alpha>0$, then~\eqref{uT} implies $u_T\in W_{p,D}^2(\Omega)$
since 
$$
\|e^{-t A(u_T)}\|_{\mathcal{L}(W_{p,D}^{\alpha}(\Omega),W_{p,D}^{2}(\Omega))}\le M(S_0)e^{\nu(S_0) t} t^{\alpha/2-1}\,,\quad t>0\,,
$$
due to \cite[II.Lemma 5.1.3]{LQPP}. 
Integrating \eqref{1} with respect to $t\in (0,T)$ then gives \eqref{p2}. 
This proves Corollary~\ref{C1}.

\section{Uniqueness for Small Data}

As noticed in \cite{Stara20} one can prove the uniqueness of solutions to~\eqref{1} if $S_0:=T\|u^0\|_\infty$ is small provided that, in addition, 
\bqn\label{Lip}
\varphi:\R\rightarrow\R\ \text{ is locally Lipschitz continuous}
\eqn
and 
\bqn\label{mon}
s\mapsto \varphi(s)s \ \text{ is non-decreasing}\,.
\eqn
Indeed, suppose the conditions of Theorem~\ref{T1}~(i) with $p\ge 2$ and let \eqref{Lip} and \eqref{mon} hold true. Consider two solutions $u$ and $v$ to~\eqref{1} with $u(0)=v(0)=u^0\in L_\infty(\Omega)$ and set $w_T:=u_T-v_T$. Then, 
\bqn\label{lip1}
\|\varphi(u_T)-\varphi(v_T)\|_2\le L(S_0)\,\| u_T-v_T\|_2 = L(S_0)\,\| w_T\|_2
\eqn
for some constant $L(S_0)$ since \eqref{Lip} implies that $\varphi$ is uniformly Lipschitz on the set $[-S_0,S_0]$.
The same argument leading to \eqref{p1} entails that
\begin{equation}\label{x1}
\begin{split}
\| \nabla w_T\|_2^2&=-\int_\Omega \big(\varphi(u_T) u_T-\varphi(v_T) v_T\big)\,\big(u_T-v_T\big) \,\rd x+\int_\Omega \big(v(T)-u(T)\big)\, w_T\,\rd x\\
&\le \|v(T)-u(T)\|_2\, \|w_T\|_2\,,
\end{split}
\end{equation}
where \eqref{mon} allows us to conclude the inequality.
Now, owing to \eqref{form}, \eqref{cs}, and \eqref{lip1} we have
\begin{equation}\label{es}
\begin{split}
\|v(T)-u(T)\|_2 &=\big\|\big(e^{-t A(u_T)}-e^{-t A(v_T)}\big) u^0 \big\|_2\\
&  \le \int_0^T \big\| e^{-(t-s) A(u_T)}\big\|_{\mathcal{L}(L_2(\Omega))}\, \|\varphi(u_T)-\varphi(v_T)\|_2\, \big\|e^{-s A(v_T)}\big\|_{\mathcal{L}(L_\infty(\Omega))}\, \|u^0\|_\infty\,\rd s\\
&\le S_0\, L(S_0)\,\| w_T\|_2 \,.
\end{split}
\end{equation}
Therefore, \eqref{x1} and \eqref{es} entail that
\begin{equation*}
\begin{split}
\| \nabla w_T\|_2^2
&\le S_0\, L(S_0)\,\| w_T\|_2^2\le c(\Omega)\,  S_0\, L(S_0)\,\|\nabla w_T\|_2^2\,,
\end{split}
\end{equation*}
where $c(\Omega)$ is the constant from Poincar\'e's inequality. Now, if $c(\Omega) S_0 L(S_0)<1$, this implies that $w_T\equiv 0$ in $\Omega$, hence $u_T=v_T$ which readily implies that the solutions $u$ and $v$ coincide.\\

However, the monotonicity condition \eqref{mon} is not needed to prove uniqueness of solutions to \eqref{1} for small data as is shown in the next proposition. Moreover, in order to prove existence and uniqueness of solutions to~\eqref{1} we also do not need a sign or growth condition on $\varphi$ in this particular case and only impose the Lipschitz condition~\eqref{Lip}.

\begin{proposition}\label{P1}
Let $\varphi$ satisfy~\eqref{Lip}, let $2p>n$, and consider $u^0\in L_p(\Omega)$. There is $R>0$ such that~\eqref{1} has a unique solution 
$$
u\in C\big([0,T],L_p(\Omega)\big)\cap C^1\big((0,T],L_p(\Omega)\big)\cap C\big((0,T],W_{p}^2(\Omega)\big)
$$
provided that $T \|u^0\|_p\le R$.
\end{proposition}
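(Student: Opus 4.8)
The plan is to replace Schauder's fixed point theorem by Banach's contraction principle, exploiting that \eqref{Lip} yields Lipschitz control of the Nemytskii operator $\varphi$ on balls of $L_\infty(\Omega)$. Since $2p>n$ I first fix $2\alpha\in(n/p,2)$, so that $W_{p,D}^{2\alpha}(\Omega)$ embeds continuously into $C(\bar\Omega)$, and I fix a radius $\rho>0$ (for instance $\rho:=1$). On $[-\rho,\rho]$ the function $\varphi$ is bounded, say by $K(\rho)$, and Lipschitz, say with constant $L(\rho)$. Hence, for every $u_T$ in the closed ball $X:=\bar{\mathbb{B}}_{L_\infty(\Omega)}(0,\rho)$ the potential $\varphi(u_T)\in L_\infty(\Omega)$ is a bounded multiplier on $L_p(\Omega)$ with $\|\varphi(u_T)\|_{\mathcal{L}(L_p(\Omega))}\le K(\rho)$, so the perturbation result \cite[I.Theorem 1.3.1]{LQPP} gives \eqref{301} with constants depending only on $\rho$. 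Consequently $-A(u_T)$ generates an analytic semigroup and there are $M=M(\rho)\ge 1$ and $\nu=\nu(\rho)>0$ with $\|e^{-tA(u_T)}\|_{\mathcal{L}(L_p(\Omega))}\le Me^{\nu t}$ and, as in \eqref{smoot}, $\|e^{-tA(u_T)}\|_{\mathcal{L}(L_p(\Omega),W_{p,D}^{2\alpha}(\Omega))}\le Me^{\nu t}t^{-\alpha}$ for all $t>0$ and all $u_T\in X$. In contrast to Theorem~\ref{T1}, the absence of a sign condition forces $M\ge 1$ and the growth factor $e^{\nu t}$, which is exactly why the clean $L_p$-ball argument must be replaced by an $L_\infty$-ball argument based on smoothing.

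Next I show that $\Phi$ from \eqref{Phi} maps $X$ into itself and is a contraction once $T\|u^0\|_p$ is small. Integrating the smoothing estimate as in \eqref{smoott} and using the embedding into $C(\bar\Omega)$ gives $\|\Phi(u_T)\|_\infty\le c\|\Phi(u_T)\|_{W_{p,D}^{2\alpha}(\Omega)}\le C_1(\rho,T)\|u^0\|_p$ with $C_1(\rho,T):=cM(1-\alpha)^{-1}e^{\nu T}T^{1-\alpha}$, so $\Phi(X)\subset X$ provided $C_1(\rho,T)\|u^0\|_p\le\rho$. For the contraction I use the Duhamel formula \eqref{form} together with the two semigroup estimates and the bound $\|\varphi(u_T)-\varphi(v_T)\|_{\mathcal{L}(L_p(\Omega))}\le\|\varphi(u_T)-\varphi(v_T)\|_\infty\le L(\rho)\|u_T-v_T\|_\infty$, valid on $X$; estimating the inner factor $e^{-sA(v_T)}u^0$ in $L_p(\Omega)$ produces an extra factor $\|u^0\|_p$, and carrying out the two time integrations yields $\|\Phi(u_T)-\Phi(v_T)\|_\infty\le C_2(\rho,T)\|u^0\|_p\,\|u_T-v_T\|_\infty$ for some $C_2(\rho,T)>0$. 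The decisive point is that this Lipschitz constant of $\Phi$ carries the factor $\|u^0\|_p$, so it is $<1$ as soon as $\|u^0\|_p$ is small.

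Finally, having fixed $\rho$, I choose $R>0$ (depending on $T$ and on $\varphi,\Omega,p,n$ through $M,\nu,L$) so small that $T\|u^0\|_p\le R$ forces both $C_1(\rho,T)\|u^0\|_p\le\rho$ and $C_2(\rho,T)\|u^0\|_p\le 1/2$. Then $\Phi$ is a contraction of the complete metric space $X$ into itself, and Banach's fixed point theorem yields a unique $u_T\in X$ with $u_T=\Phi(u_T)$. Defining $u$ by \eqref{u} produces, exactly as in the proof of Theorem~\ref{T1}, a solution of \eqref{1} in the asserted regularity class whose nonlocal term is $u_T$, and uniqueness of the fixed point gives uniqueness of this solution. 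I expect the main obstacle to lie less in the individual estimates than in organizing them: since no sign or growth condition is available, the constants $M,\nu$ cannot be normalized to $M=1$, $\nu=0$ as in Theorem~\ref{T1}, which forces the $L_\infty$-ball argument and makes $R$ depend on $T$; and the truly delicate point is to confirm that every solution of the stated regularity has its nonlocal term inside $X$, so that the contraction yields genuine, and not merely local, uniqueness. For this one must exploit the smallness of the data together with the smoothing estimate \eqref{smoot} to bound $\|u_T\|_\infty$ a priori and thereby confine any solution of \eqref{uT} to $X$.
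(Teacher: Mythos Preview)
Your approach is essentially the paper's: Banach's fixed point theorem applied to $\Phi$, using the Duhamel identity \eqref{form}, the smoothing estimate \eqref{smoot}, and the embedding $W_{p,D}^{2\alpha}(\Omega)\hookrightarrow C(\bar\Omega)$ to obtain both the self-map and the contraction. The only difference is cosmetic: the paper takes the ball $X_T$ in $W_{p,D}^{2\alpha}(\Omega)$ rather than in $L_\infty(\Omega)$, but since every estimate in either version passes through $W_{p,D}^{2\alpha}$ anyway, the two arguments are interchangeable. Your final remark about \emph{genuine} versus \emph{local} uniqueness is well taken; the paper's proof likewise concludes only with ``there exists a unique $u_T\in X_T$'' and does not supply the a~priori confinement you describe.
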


\begin{proof}
We use Banach's fixed point theorem. For this fix $2\alpha\in (n/p,2)$, let $S_0>0$, and put
$$
X_T:=\bar{\mathbb{B}}_{W_{p,D}^{2\alpha}(\Omega)}(0,S_0)\,.
$$
Note that $W_{p,D}^{2\alpha}(\Omega)$ embeds continuously into $C(\bar\Omega)$. 
Thus, since $\varphi$ is uniformly Lipschitz continuous on compact sets, there is a constant $L(S_0)>0$ such that $\varphi$ (considered as Nemytskii operator) satisfies
\bqn\label{100}
\|\varphi(u_T)-\varphi(v_T)\|_\infty \le L(S_0)\,\| u_T-v_T\|_{W_{p,D}^{2\alpha}(\Omega)}\,,\quad u_T, v_T\in X_T\,.
\eqn 
In particular, since $\|\varphi(u_T)\|_\infty\le c(S_0)$, it follows as in the proof of part (i) of Theorem~\ref{T1} that  \cite[I.Theorem 1.3.1]{LQPP} and \cite[II.Lemma 5.1.3]{LQPP} imply \eqref{301} and \eqref{smoot}. The latter entails, as in \eqref{smoott}, that
\bqn\label{smoott1}
\|\Phi(u_T)\|_{W_{p,D}^{2\alpha}(\Omega)} \le \frac{M(S_0)}{1-\alpha} e^{\nu(S_0) T} T^{1-\alpha} \|u^0\|_{p} \,.
\eqn
Moreover, \eqref{lll} along with \eqref{100} also yield
\begin{align*}
\|&e^{-t A(u_T)}-e^{-t A(v_T)}\|_{\mathcal{L}(L_p(\Omega),W_{p,D}^{2\alpha}(\Omega))}\le 
c(S_0)\, e^{c(S_0) T} \, t^{1-\alpha} \,
 \| u_T-v_T\|_{W_{p,D}^{2\alpha}(\Omega)}\,,\quad u_T, v_T\in X_T\,,
\end{align*}
for some $c(S_0)>0$. Therefore
\begin{align*}
\|\Phi(u_T)-\Phi(v_T)\|_{W_{p,D}^{2\alpha}(\Omega)}
 &\le \int_0^T \| e^{-t A(u_T)}-e^{-t A(v_T)}\|_{\mathcal{L}(L_p(\Omega),W_{p,D}^{2\alpha}(\Omega))}\,\|u^0\|_p \,\rd t\\
&\le c_1(S_0) \, e^{c(S_0)T}\,T^{2-\alpha}\,\|u^0\|_p \, \| u_T-v_T\|_{W_{p,D}^{2\alpha}(\Omega)} \,.
\end{align*}
Together with \eqref{smoott1} this shows that $\Phi:X_T\rightarrow X_T$ is a contraction provided that $T\|u^0\|_p$ is small enough. Consequently, there exists a unique $u_T\in X_T$ with $\Phi(u_T)=u_T$ if $T\|u^0\|_p$ is small enough. 
\end{proof}

\end{document}